\definecolor{Green}{rgb}{0.0,0.45,0.0}
\def\NewTheorem#1#2{%
	\newaliascnt{#1}{thmm}
	\newtheorem{#1}[#1]{#2}
	\aliascntresetthe{#1}
	\expandafter\def\csname #1autorefname\endcsname{#2}
}
\numberwithin{equation}{section}
\theoremstyle{definition}
\theoremstyle{remark}
\theoremstyle{definition}
\title{A Note on 
Central Limit Theorems for Additive Functionals of Ergodic Markov Processes}
\begin{document}

\author{Edward C.\ Waymire\thanks{Department of Mathematics,  Oregon State University, Corvallis, OR, 97331. {waymire@oregonstate.edu}}
}

\maketitle

\begin{abstract}
This note provides a simple proof of the highly cited
Kipnis-Varadhan central limit theorem for 
additive functionals of 
a time-reversible 
continuous parameter Markov process 
as a corollary to Bhattacharya's 
general central limit theorem for additive
functionals of ergodic Markov processes.  This makes
the Kipnis-Varadhan and Bhattacharya
 central limit theorems equivalent for the case of
time-reversible Markov processes. 
 This is revealed in a fascinating \lq\lq double-limit problem\rq\rq
 that is resolved by a simple use of 
Bhattacharya's range condition on the infinitesimal generator. 
A new version of Bhattacharya's law of the iterated logarithm is also included for additive functionals of time-reversible Markov processes as an application of the methods of the present paper.

\end{abstract}

\section{Introduction and Preliminaries}
\label{introsec}
This note provides a simple proof of the Kipnis-Varadhan central limit theorem \cite{kipvar} for additive functionals $\int_0^tf(X(s))ds$ 
 of a time-reversible continuous parameter Markov process
 $X$ with 
 infinitesimal generator\footnote{The `hat' notation
is adopted from \cite{RBfclt} to
 signify the infinitesimal generator of the semigroup extension
from the Banach space   of bounded, measurable functions
to $L^2(S,\mathcal{S},\pi)$.  It will continue to be used
in reference to operators defined on Hilbert spaces.
} $\hat{A}$, for $f$ belonging
  to the domain of $(-\hat{A})^{-\frac{1}{2}}$. 
  Given the rather vast literature on this theorem for self-adjoint
$\hat{A}$, it may come
as a surprise that  this renders Bhattacharya's central limit theorem and
the Kipnis-Varadhan central limit theorem 
to be equivalent for the case of time-reversible Markov processes. 
To this end, a fascinating\footnote{The highlighted importance of double-limit problems to mathematics
is often attributed to G.H. Hardy (1928) Appendix III to {\it A Course in Pure Mathematics}, Cambridge Mathematical Library, Cambridge Press.} \lq\lq double-limit problem\rq\rq is
identified and resolved by a simple  use of 
Bhattacharya's general range condition
$f\in\mathcal{R}_{\hat{A}}$ given in \cite{RBfclt}; see 
Theorem \ref{rabifcltthm} below.  

A notable take-away of the method presented here is that in the case of time-reversible Markov
processes there is no loss of generality in restricting the proof
to integrands
belonging to the range of the generator.  This is also
exemplified with a new version of Bhattacharya's law of the iterated logarithm for additive functionals of time-reversible 
Markov processes to conclude this article.
For ease of reference both central limit theorems 
from \cites{RBfclt, kipvar} are stated
 in the next section.

 Aided by the special structure furnished by the added
 self-adjointness condition, the original proof provided in \cite{kipvar} is quite deep in its
 use of spectral theory for {\it unbounded} self-adjoint operators
 and factorization $-\hat{A}^{-1} = (-\hat{A})^{-\frac{1}{2}}(-\hat{A})^{-\frac{1}{2}}$
 in an analysis of a double-limit in which 
 a resolvent transform 
 parameter $\lambda\downarrow0$ is followed by the 
 scaling parameter
 $n\to\infty$ described below.  
 In a subsequent 
  paper \cite{varadhan95},  a more 
  \lq\lq functional analytic\rq\rq 
  proof is provided in the self-adjoint case
 for the double-limit approach of
  \cite{kipvar}, but without specific use of spectral theory in
  key estimates.  This approach
 is also developed in lengthier
detail in \cites{liggett99,landim,sunder}
where a novel Hilbert space convexity property 
is proven and exploited in the double-limit of \cite{kipvar}.
 Apart from such non-trivial technicalities,
 the essence of these proofs can be described as follows.
 
 To set the framework as in \cite{kipvar},
 consider the following 
identity 
 for the resolvent operators, made obvious by their definition $R_\lambda f = (\lambda-\hat{A})^{-1}f$,
$\lambda >0$. Namely,
\begin{equation}\label{frep}
f = \lambda R_\lambda f - 
\hat{A}R_\lambda f, \quad \lambda >0.
\end{equation} 
Then, for each $n\ge 1$,
\begin{equation}\label{intrep}
\frac{1}{\sqrt{n}}\int_0^{nt}f(X(s))ds 
= \frac{1}{\sqrt{n}}\int_0^{nt}\lambda R_\lambda f(X(s))ds
+ \frac{1}{\sqrt{n}}\int_0^{nt}(-\hat{A})R_\lambda f(X(s))ds,
\  \lambda>0, t\ge 0.
\end{equation}
It is convenient to write the three 
sequences of processes 
corresponding to the terms 
appearing in (\ref{intrep}) from left to right, as $\{I_n(f,t):t\ge0\}$, 
$\{\Lambda_n(f,\lambda,t):t\ge 0\}$, and $\{I_n(-\hat{A}R_\lambda f,t): t\ge 0\}$, respectively. In this notation, 
(\ref{intrep}) may be expressed as 
\begin{equation}
I_n(f,t)=\Lambda_n(f,\lambda,t)+I_n(-\hat{A}R_{\lambda}f,t), \quad
t\ge0.
\end{equation}
Also denote the (unique) invariant probability by $\pi$ and
the corresponding innerproduct on $L^2(S,\pi)$ by $\langle\cdot,\cdot\rangle_\pi$, and define $1^\perp = \{f\in L^2(S,\pi):\langle f,1\rangle_\pi=0\}$.
To
 control $\Lambda_n(f,\lambda,\cdot)$ as a function of $n$ and $\lambda>0$, observe that
using (\ref{frep}) one has
\begin{equation}
\langle f,R_\lambda f\rangle_\pi =
\langle \lambda R_\lambda f,R_\lambda f\rangle_\pi
+ \langle-\hat{A}R_\lambda f,R_\lambda f\rangle_\pi
=\lambda||R_\lambda f||_\pi^2 + ||(-\hat{A})^{\frac{1}{2}}R_\lambda f||^2_\pi
\end{equation}
Thus, 
\begin{equation*}
\lambda||R_\lambda f||_\pi^2 + ||(-\hat{A})^{\frac{1}{2}}R_\lambda f||^2_\pi 
=|\langle f,R_\lambda f\rangle_\pi|
\le ||(-\hat{A})^{-\frac{1}{2}}f||_\pi ||(-\hat{A})^{\frac{1}{2}}R_\lambda f||_\pi,
\end{equation*}
and therefore
\begin{equation}\label{lambdaRlambdabound}
\lambda^{\frac{1}{2}}||R_\lambda f||_\pi
 \le\frac{1}{2}||(-\hat{A})^{-\frac{1}{2}}f||_\pi, \quad \lambda >0.
\end{equation}
Now, by Jensen's inequality applied to the suitably rescaled integral
$\int_0^{nt}$,
\begin{equation}\label{jensenbnd}
\mathbb{E}(\max_{0\le t\le T}\frac{1}{\sqrt{n}}\int_0^{nt}\lambda R_\lambda f(X(s))ds)^2
\le nT^2||\lambda R_\lambda f||^2_\pi
\le \frac{1}{4}nT^2\lambda||(-\hat{A})^{-\frac{1}{2}} f||^2_\pi.
\end{equation}  
So, along any sequence of values decreasing to zero of
 \begin{equation}
0< \lambda_n=o(\frac{1}{n}), 
 \end{equation}
 one has that
\begin{equation}\label{o(1)bound}
\max_{0\le t\le T}\Lambda_n(f,\lambda_n,t) 
=o(1)\quad \text{in probability as}\  n\to\infty. 
\end{equation}
So it follows that
$I_{n}(f,\cdot)$ and $I_{n}(f,\cdot)-\Lambda_{n}(f,\lambda_{n},\cdot) = I_n(-\hat{A}R_{\lambda_n}f,\cdot)$ have the same distribution
in the limit as $n\to\infty$.

In the aforementioned approaches \cites{kipvar, varadhan95, liggett99, landim, sunder}, the authors first add and subtract
extra terms\footnote{Notably, in the case $f=\hat{A}g\in\mathcal{R}_{\hat{A}}$,
up to a closed formula for the variance, 
the Gaussian limit of $I_{n}(f,\cdot)$ as $n\to\infty$
 proven in \cite{RBfclt}
 is made reasonably straightforward from the martingale central
 limit theorem and the fact that 
$g(X_t)-\int_0^tf(X_s)ds, t\ge0$, is naturally a martingale.}
(\ref{intrep}) in order to explicitly express
$I_n(f,\cdot)$ in terms of 
Dynkin martingale.  
The limit $\lambda\downarrow 0$ is then shown to exist,
and attention is turned to
analyzing that result in a second limit as
 $n\to\infty$.  The second stage is where the technicalities emerge,
and naturally beg the question regarding the nature and extent
of a reversal in the order of limits ?
Namely, on purely formal grounds, 
 the problem may be viewed as justifying the 
 following formal interchange of limits and invoking \cite{RBfclt}:
 \begin{eqnarray}\label{limitorder}
 \lim_{n\to\infty}{I}_n(f,t) 
 &=& \lim_n\lim_{\lambda\downarrow0}\{\Lambda_{n}(f,t) +
 I_{n}(-AR_\lambda f,t)\}\nonumber\\
 &=&\lim_{n} \lim_{\lambda\downarrow0}I_{n}(-AR_\lambda f,t)\nonumber\\
 &=&\lim_{\lambda\downarrow0} \lim_nI_{n}(-AR_\lambda f,t)\nonumber\\
 &=&\lim_{\lambda\downarrow0}N(0,\sigma^2(-\hat{A}R_\lambda f))
 = N(0,\sigma^2(f)) ?
 \end{eqnarray}
 
 As a final preliminary before turning to the proof in the next section, 
it may be helpful to have a sketch of a well-known construction
 of the square root of a positive self-adjoint operator
$L$ on a
Hilbert space $H$,
  i.e.,
 $\langle Lf,f\rangle\ge 0, \forall f\in H$. 
The resolvent operators $R_\lambda=(\lambda + L)^{-1},
\lambda>0$,
are  bounded, self-adjoint
positive operators with square roots
 $S_\lambda = R_\lambda^{\frac{1}{2}}, \lambda >0$.
Letting $U_\lambda=S_\lambda^{-1}$, one
 may show that
 $Uf :=\lim_{\lambda\downarrow0}U_\lambda f$,
 exists for all $f$ in the domain $\mathcal{D}_{L}$, and defines
  a positive, self-adjoint operator such that $U^2=L$,
  with $\mathcal{D}_U=\mathcal{D}_{U_1}= 
  \mathcal{D}_{U_\lambda}, \lambda >0$; see \cite{bernau}.
In the proof
of Corollary \ref{equiv} one may consider $L=(-\hat{A})^{-1}$  on 
$\mathcal{D}_{(-\hat{A})^{-1}}
= \mathcal{R}_{\hat{A}}\subset H=1^\perp$ and hence,
$U=(-\hat{A})^{-\frac{1}{2}}$ with
$\mathcal{R}_{\hat{A}}=\mathcal{D}_{(-\hat{A})^{-1}}\subset\mathcal{D}_{(-\hat{A})^{-\frac{1}{2}}}$.

\section{Proof of the Kipnis-Varadhan Theorem and its
Equivalence to Bhattacharya's Theorem}
For ease of reference both theorems are stated to start.
\begin{thm}[Bhattacharya \cite{RBfclt}]
\label{rabifcltthm}
Suppose that $X = \{X(t):t\ge0\}$ is a progressively measurable
ergodic continuous parameter
Markov process on a measurable state space $(S,{\mathcal S})$
starting from a unique invariant probability $\pi$, and defined
on a complete probability space $(\Omega,{\mathcal F},P_\pi)$.   Then for centered
$f\in 1^\perp\subset L^2(S,\mathcal{S},\pi)$, i.e., 
$\int_Sfd\pi=0$,
 belonging to the range ${\mathcal R}_{\hat{A}}$
of a densely defined, closed  infinitesimal generator 
$(\hat{A},{\mathcal D}_{\hat{A}})
\subset L^2(S,\mathcal{S},\pi)$,  the sequence 
$n^{-\frac{1}{2}}\int_0^{nt}f(X(s))ds: t\ge 0\}, n\ge 1$, converges weakly in $C[0,\infty)$ to Brownian
motion starting at $0$ with zero drift and 
diffusion coefficient 
\begin{equation}\label{diffcoeff}
\sigma^2(f) = 2\langle -\hat{A}^{-1}f,f\rangle_\pi 
\end{equation}
\end{thm}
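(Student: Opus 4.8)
The plan is to reduce the assertion to the functional central limit theorem for square-integrable martingales with stationary, ergodic increments, by writing $\int_0^{\cdot}f(X(s))\,ds$ as an asymptotically negligible boundary term minus such a martingale. First, since $f\in\mathcal{R}_{\hat A}$, I would fix $g\in\mathcal{D}_{\hat A}$ with $\hat A g=f$ and invoke Dynkin's formula (equivalently, the martingale characterization of the $L^2(\pi)$-generator) to conclude that
$$ M_t \;:=\; g(X(t))-g(X(0))-\int_0^t \hat A g(X(s))\,ds \;=\; g(X(t))-g(X(0))-\int_0^t f(X(s))\,ds $$
is a mean-zero martingale under $P_\pi$ for the natural filtration of $X$; since $g\in L^2(\pi)$ and $\pi$ is invariant, $M$ is square-integrable with stationary increments. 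Rearranging gives the identity $\int_0^t f(X(s))\,ds = g(X(t))-g(X(0))-M_t$, on which everything rests.

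Next I would rescale: $\xi_n(t):=n^{-1/2}\int_0^{nt}f(X(s))\,ds = n^{-1/2}\bigl(g(X(nt))-g(X(0))\bigr) - n^{-1/2}M_{nt}$, and show the boundary term is negligible in $C[0,\infty)$. For fixed $T>0$ a maximal inequality together with stationarity and $g\in L^2(\pi)$ gives, for every $\eps>0$,
$$ P_\pi\Bigl(\sup_{0\le t\le T}n^{-1/2}\abs{g(X(nt))}>\eps\Bigr)\;\le\; C\,\eps^{-2}\,\EXP_\pi\!\bigl[\,g(X(0))^2\,\IND{\abs{g(X(0))}>\eps'\sqrt n}\,\bigr], $$
which tends to $0$ as $n\to\infty$ (using right-continuity of paths, and if needed a discretization of $[0,nT]$ whose error is controlled by the increments of $M$); hence $n^{-1/2}\bigl(g(X(n\cdot))-g(X(0))\bigr)\to 0$ uniformly on compacts in probability.

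It then remains to prove that $-n^{-1/2}M_{n\cdot}$ converges weakly in $C[0,\infty)$ to a Brownian motion with diffusion coefficient $\sigma^2(f)$, and to identify that coefficient. The predictable quadratic variation of $M$ is $\langle M\rangle_t=\int_0^t\Gamma(g,g)(X(s))\,ds$, where $\Gamma(g,g)$ is the carr\'e-du-champ of $g$ --- formally $\hat A(g^2)-2g\hat A g$, realized through the semigroup limit $t^{-1}\EXP_x\bigl[(g(X(t))-g(x))^2\bigr]\to\Gamma(g,g)(x)$ when $g^2\notin\mathcal{D}_{\hat A}$. By the ergodic theorem, $n^{-1}\langle M\rangle_{nt}\to t\,\sigma^2$ almost surely, where $\sigma^2=\EXP_\pi[\Gamma(g,g)]=-2\langle g,\hat A g\rangle_\pi$, the last equality because $\int_S\hat A h\,d\pi=0$ for $h\in\mathcal{D}_{\hat A}$; this is exactly \eqref{diffcoeff} since $g=\hat A^{-1}f$. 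Stationarity, $L^2$-integrability and ergodicity also supply the conditional Lindeberg condition, so the martingale functional CLT yields $n^{-1/2}M_{n\cdot}\Rightarrow\sigma W$, and combining with the previous step and Slutsky's theorem gives $\xi_n\Rightarrow -\sigma W\eqdstn\sigma W$, the claimed Brownian limit.

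The main obstacle will be the martingale decomposition together with the identification and, crucially, the finiteness of the limiting variance under only the hypothesis $g\in\mathcal{D}_{\hat A}$: because $g^2$ need not lie in the domain, $\Gamma(g,g)$ and $\langle M\rangle$ must be obtained by a semigroup/truncation argument rather than by a formal application of $\hat A$, and one must check $\EXP_\pi[\langle M\rangle_1]=-2\langle g,\hat A g\rangle_\pi<\infty$. The continuous-time maximal estimate for the boundary term and the continuous-time martingale FCLT with merely ergodic (not i.i.d.) increments are the remaining technical points. It is precisely the finiteness and value of this variance --- coming from $\hat A g=f$ with $f\in\mathcal{R}_{\hat A}$ --- that will have to be reconciled later, through the resolvent identity for $R_\lambda=(\lambda-\hat A)^{-1}$, with the Kipnis--Varadhan condition $f\in\mathcal{D}_{(-\hat A)^{-1/2}}$.
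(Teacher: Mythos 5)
The paper you are working from does not prove this theorem at all: Theorem \ref{rabifcltthm} is imported verbatim from Bhattacharya (1982) \cite{RBfclt} and serves as the black-box input for the note's actual contribution (Theorem \ref{kipvarthm}), so there is no internal proof to compare against. Your sketch is essentially the classical argument of the cited source: solve the Poisson equation $\hat A g=f$ (possible precisely because $f\in\mathcal{R}_{\hat A}$), write $\int_0^{t}f(X(s))\,ds=g(X(t))-g(X(0))-M_t$ with $M$ the Dynkin martingale, kill the boundary term, and apply the FCLT for square-integrable martingales with stationary ergodic increments, identifying $\sigma^2(f)=-2\langle g,\hat A g\rangle_\pi$ via $\lim_{t\downarrow0}t^{-1}\EXP_\pi[(g(X(t))-g(X(0)))^2]$. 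That is the right route and matches what \cite{RBfclt} does; it is deliberately different in spirit from the resolvent-based argument this note uses for its own theorem, where the Poisson equation is only solved approximately through $R_\lambda f$.

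One step of your write-up is not literally correct as displayed: the bound $P_\pi\bigl(\sup_{0\le t\le T}n^{-1/2}|g(X(nt))|>\epsilon\bigr)\le C\epsilon^{-2}\EXP_\pi\bigl[g(X(0))^2\,\IND{|g(X(0))|>\epsilon'\sqrt n}\bigr]$ cannot follow from stationarity of the one-dimensional marginals alone, since the supremum runs over a continuum and the running maximum of a stationary process is not controlled by a single marginal tail. The standard repair --- which your parenthetical remark gestures at --- is to discretize $[0,nT]$ into unit blocks and bound $\sup_{s\in[k,k+1]}|g(X(s))|$ by $|g(X(k))|+\int_k^{k+1}|f(X(u))|\,du+\sup_{s\in[k,k+1]}|M_s-M_k|$, each of which is a stationary-in-$k$ sequence with square-integrable generic term (Doob's inequality for the martingale block, Jensen for the integral), after which the discrete union-bound/truncation argument gives $o(\sqrt n)$ uniformly. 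With that fix, and with the carr\'e-du-champ/quadratic-variation identification done through the semigroup limit as you indicate (since $g^2$ need not be in $\mathcal{D}_{\hat A}$), your outline is a faithful reconstruction of the proof of the cited theorem.
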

\begin{remark}  Although not required for the 
proof here, there are a few simple
facts from standard semigroup theory that seem 
worth noting.
For strongly 
continuous semigroups, 
the assumption of a {\it closed} infinitesimal
generator follows from the Hille-Yosida theorem.
In particular,  $\hat{A}$ is a densely defined {\it closed}
operator on $L^2(S,\mathcal{S},\pi)$. 
It also obvious that for self-adjoint $\hat{A}$
and non-trivial $f\in\mathcal{R}_{\hat{A}}$, one has
$\sigma^2(f)= 2||(-A)^{\frac{1}{2}}g||_\pi^2>0$, $f=\hat{A}g$.
 Moreover, a densely defined self-adjoint operator is closed
 since the adjoints of densely defined 
 operators are always closed. 
\end{remark}
 \begin{thm}[Kipnis-Varadhan \cite{kipvar}]
\label{kipvarthm}
In the framework of Theorem \ref{rabifcltthm}
assume that $\hat{A}$ is a densely defined, self-adjoint infinitesimal
generator.
 If $f\in\mathcal{D}_{(-\hat{A})^{-\frac{1}{2}}}$ is centered, then
the sequence 
$n^{-\frac{1}{2}}\int_0^{nt}f(X(s))ds: t\ge 0\}, n\ge 1$, converges weakly in $C[0,\infty)$ to Brownian
motion starting at $0$ with zero drift and 
diffusion coefficient  
  $0<\sigma^2(f)=2\langle(-\hat{A})^{-\frac{1}{2}}f,(-\hat{A})^{-\frac{1}{2}}f\rangle_\pi < \infty$.
 \end{thm}
\begin{proof}
Assume that $f\in\mathcal{D}_{(-\hat{A})^{-\frac{1}{2}}}\cap1^\perp$.
Let $\lambda >0$.  Consider the 
identity (\ref{frep}) for the resolvent operators $R_\lambda f$,
$\lambda >0$, and the corresponding representation
(\ref{intrep}) for $I_n(f,\cdot)$. 
Obviously,  
 $\hat{A}R_{\lambda_\ell} f\in\mathcal{R}_{\hat{A}}$ satisfies Bhattacharya's
 range condition in \cite{RBfclt} with $g=R_{\lambda_\ell} f$.
  Thus, the  dispersion rate $\sigma^2_{\lambda_\ell}$
   may be computed
  in the limit as $n\to\infty$,
 as $2\langle-\hat{A}R_{\lambda_\ell}f,
R_{\lambda_\ell}f\rangle_\pi$. Now,
using positive
 operator monotonicity of
$||(-\hat{A})^{\frac{1}{2}}R_\lambda f||_\pi = ||(\lambda(-\hat{A})^{-\frac{1}{2}}+(-\hat{A})^{\frac{1}{2}})^{-1}f||_\pi$,
for 
  $f\in\mathcal{D}_{(-\hat{A})^{-\frac{1}{2}}}$, one has in the limit $\lambda_\ell\downarrow0$,
\begin{eqnarray}\label{varlim}
&&\sigma^2(-\hat{A}R_{\lambda_\ell}f)\nonumber\\
&=&2\langle-\hat{A}R_{\lambda_\ell}f,
R_{\lambda_\ell}f\rangle_\pi
=2\langle(-\hat{A})^{\frac{1}{2}}R_{\lambda_n}f,
(-\hat{A})^{\frac{1}{2}}R_{\lambda_\ell}f\rangle_\pi\nonumber\\
&=&2\langle(-\hat{A})^{\frac{1}{2}}(\lambda_\ell-\hat{A})^{-1}f,(-\hat{A})^{\frac{1}{2}}(\lambda_\ell-\hat{A})^{-1}f\rangle_\pi\nonumber\\
&=&2\langle(\lambda_\ell(-\hat{A})^{-\frac{1}{2}}+(-\hat{A})^{\frac{1}{2}})^{-1}f,(\lambda_\ell(-\hat{A})^{-\frac{1}{2}}+(-\hat{A})^{\frac{1}{2}})^{-1}f\rangle_\pi
\uparrow 2\langle(-\hat{A})^{-\frac{1}{2}}f,(-\hat{A})^{-\frac{1}{2}}f\rangle_\pi\nonumber\\
&&
\end{eqnarray}
Now, let us consider
the sequence
$\{I_{n}(-AR_{\lambda_{n}}f,\cdot)\}_n$.
Using (i) the well-known
resolvent identity\footnote{For example, see
 (\cite{BhatWay3}, p. 25).} 
 $R_\lambda-R_\mu=(\mu-\lambda)R_\lambda R_\mu$,  
  (ii) a decreasing sequence $\{\lambda_{n}=o(\frac{1}{n})\}_{n=1}^\infty$ 
and (iii)  the bound (\ref{lambdaRlambdabound}), one has 
\begin{eqnarray}\label{cauchyprop}
&&\mathbb{E}\max_{0\le t\le T}|I_{n}(-AR_{\lambda_{n}}f,t)-I_{n}(-AR_{\lambda_{\ell}}f,t)|\nonumber\\
&\le &\frac{1}{\sqrt{n}}\int_0^{nT}\mathbb{E}|\hat{A}R_{\lambda_{n}}f(X(s))
-\hat{A}R_{\lambda_{\ell}}f(X(s))|ds\nonumber\\
&\le& \sqrt{n}T(|\lambda_{n}-\lambda_{\ell}|)||\hat{A}R_{\lambda_{n}}R_{\lambda_{\ell}}f||_\pi
= 
\begin{cases}\sqrt{n}T(\lambda_{n}-\lambda_{\ell})||\hat{A}R_{\lambda_{\ell}}R_{\lambda_{n}}f||_\pi \ &\text{if}\ \lambda_n\ge\lambda_\ell\\
\sqrt{n}T(\lambda_{\ell}-\lambda_{n})||\hat{A}R_{\lambda_{n}}R_{\lambda_{\ell}}f||_\pi \ &\text{if}\ \lambda_\ell\ge\lambda_n
\end{cases}\nonumber\\ 
&\le&\begin{cases}\sqrt{n}T(\lambda_{n}-\lambda_{\ell})2||R_{\lambda_{n}}f||_\pi \ &\text{if}\ \lambda_n\ge\lambda_\ell\\
\sqrt{n}T(\lambda_{\ell}-\lambda_{n})2||R_{\lambda_{\ell}}f||_\pi \ &\text{if}\ \lambda_\ell\ge\lambda_n
\end{cases}
\le\begin{cases}\sqrt{n}T\sqrt{\lambda_{n}}||(-\hat{A})^{-\frac{1}{2}}f||_\pi \ &\text{if}\ \lambda_n\ge\lambda_\ell\\
\sqrt{n}T\sqrt{\lambda_{\ell}}||(-\hat{A})^{-\frac{1}{2}}f||_\pi
 \ &\text{if}\ \lambda_\ell\ge\lambda_n
\end{cases}\nonumber\\ 
&=& o(1)\ \text{for}\  \lambda_n,\lambda_\ell = o(\frac{1}{n})\end{eqnarray}
e.g.,   one may take ${\lambda_n} = n^{-2}$ and $\ell$ an integer satisfying $\ell > n^{1-\delta}$  for some  $0<\delta<1/2$.
The extraneous parameter $\ell$ serves as a tuning parameter
for fixing small, positive values of $\lambda_\ell$. 

These estimates essentially complete the proof
 that Theorem \ref{rabifcltthm} implies Theorem \ref{kipvarthm}. 
To see why requires a bit of extra notation. Let $\rho$ denote the Prohorov metric for weak
convergence on $C[0,\infty)$. Fix an arbitrary $T>0$.
 Let $Q_n$ denote the distribution of 
 $\{I_n(f,t): 0\le t\le T\}$, and $Q_{n,\ell}$  the distribution
of the stochastic process $\{I_n(-\hat{A}R_{\lambda_\ell}f,t): 0\le t\le T\}$.
Also, let $Q_{\infty,\ell}$ be the distribution of Brownian motion
with dispersion coefficient $2\langle-\hat{A}R_{\lambda_{\ell}} f,R_{\lambda_\ell} f\rangle_\pi$,
and let $Q_{\infty,0}$ denote the distribution of
the Brownian motion with
zero drift and dispersion coefficient $\sigma^2(f)= 2||(-\hat{A})^{-\frac{1}{2}}f||_\pi^2$. 

Then, by the triangle inequality,
one has for arbitrary positive integer $n$ and real
parameter $\ell>0$, 
\begin{eqnarray}\label{diaglim}
\rho(Q_{n},Q_{\infty,0})
&\le&
\rho(Q_{n}, Q_{n,n})+
\rho(Q_{n,n},Q_{n,\ell})
  + \rho(Q_{n,\ell},Q_{\infty,\ell})
 +\rho(Q_{\infty,\ell},Q_{\infty,0})\nonumber\\
  &=& I + II+ III + IV,
 \end{eqnarray}
 and the left side is independent of the parameter $\ell>0$.
By (\ref{o(1)bound}), $I<\epsilon$ for all $n\ge N_\epsilon^{(1)}$ (independently of $\ell>0$).  By (\ref{varlim}), $IV<\epsilon$
for all $\ell>L_\epsilon^{(1)}$ (independently of $n$).
By Bhattacharya's theorem \cite{RBfclt}, $III<\epsilon$ for all
$n\ge N_\epsilon^{(2)}(\ell)$. Thus, for every
$\ell>L_\epsilon^{(1)}$ and every $n\ge N_\epsilon^{(1)}\vee N_\epsilon^{(2)}(\ell)$, one has
\begin{equation}
\rho(Q_{n},Q_{\infty,0})
\le 3\epsilon +
\rho(Q_{n,n},Q_{n,\ell}).
\end{equation}
In particular by (\ref{cauchyprop}), 
\begin{equation}
\limsup_{n\to\infty}\rho(Q_{n},Q_{\infty,0})
= \limsup_{n\to\infty,\ell=o(\frac{1}{n})}\rho(Q_{n},Q_{\infty,0})
\le 3\epsilon, 
\end{equation}
or, alternatively, make the bound $4\epsilon$ by choosing
$n,\ell$ sufficiently large. 
\end{proof}
\begin{cor}[Central Limit Theorem Equivalence]\label{equiv}
Bhattacharya's functional central limit theorem is equivalent
to the Kipnis-Varadhan central limit theorem for additive
functionals of time-reversible Markov processes.
\end{cor}
\begin{proof}
The above 
proof of Theorem \ref{kipvarthm} shows that Bhattacharya's
theorem implies the Kipnis-Varadhan theorem in the self-adjoint
case.  On the other hand,
the positive self-adjoint operator $-\hat{A}$
has a  unique positive square root $(-\hat{A})^{\frac{1}{2}}$
with 
the properties that $\mathcal{D}_{(-\hat{A})^{\frac{1}{2}}}
\supset\mathcal{D}_{-\hat{A}}$ and 
$\mathcal{D}_{(-\hat{A})^{-\frac{1}{2}}}\supset
\mathcal{D}_{(-\hat{A})^{-1}}={\mathcal R}_{\hat{A}}$.
In particular,  the Theorem \ref{kipvarthm} 
theorem implies
Theorem \ref{rabifcltthm} in the time-reversible case. 
\end{proof}
 Both  versions of a functional central limit theorem developed by \cite{RBfclt} and \cite{kipvar}
are notable for their applications to  solute dispersion in  \cite{guptabhat, skew, BhatWay3}, 
and to certain interacting particle systems in \cite{kipvar, liggett99, landim, sunder}, respectively. In addition to the 
order of limits problem, the approach here
may have added value in
computational and further theoretical refinements 
of the fclt, as well as to new directions of the
Gaussian random field theory
suggested by \cite{RBfclt, diaconisevans} for possble
applications in random matrix theory, quantum field theory,
or stochastic partial differential equations.  One immediate
refinement is that of the Strassen-type law of the iterated
logarithm established in \cite{RBfclt}. In particular, using 
the appropriately rescaled version of the
representation (\ref{intrep}),  one obtains the
 following equivalent
statement to that of the extension by
 \cite{RBfclt} in the time-reversible
case.
\begin{thm}
In the framework of Theorem \ref{rabifcltthm}
assume that $\hat{A}$ is a densely defined, self-adjoint infinitesimal
generator.
 If $f\in\mathcal{D}_{(-\hat{A})^{-\frac{1}{2}}}$ is centered, and
 if $\int_S|f|^{2+\delta}d\pi <\infty$ for some $\delta>0$, then
 $\{\frac{1}{\sqrt{2n\log\log n}}
\int_0^{nt}f(X(s))ds\}_{n\ge 2}$
 is almost surely
relatively compact in $C[0,1]$.  The set of limit
points is the set of all absolutely continuous functions
$\theta:[0,1]\to\mathbb{R}$ such that 
\begin{equation*}
\int_0^1\theta^\prime(x)^2dx \le \sigma^2(f), \quad\theta(0)=0.
\end{equation*}
\end{thm}
\begin{proof}
Choose decreasing $\lambda_n=o(\frac{1}{n\log\log n})$.
The two sequences $\{\frac{1}{\sqrt{2n\log\log n}}
\int_0^{nt}f(X(s))ds\}_{n\ge 2}$ and 
$\{\frac{1}{\sqrt{2n\log\log n}}
\int_0^{nt}(-\hat{A})R_{\lambda_n}f(X(s))ds\}_{n\ge 2}$ have
the same limit points since the $o(1)$ term in probability
has an almost surely $o(1)$ subsequence.
 Alternatively, one may simply
choose $\lambda_n\downarrow)$ sufficiently fast to obtain
an almost sure convergence in this representation. In addition,
  $\sigma^2(-\hat{A}R_{\lambda_n}f)$ increases
to $\sigma^2(f)$.
\end{proof}
\begin{remark} The condition
$\delta>0$ is removed in the recent arXiv 
paper \cite{chenchenhong}.
\end{remark}

  \section{Acknowledgments} The author thanks Sunder Sethuraman
for calling attention to \cites{kipvar,varadhan95,liggett99,landim}, and for sharing his unpublished
 notes \cite{sunder} on this fclt and its applications to certain
 particle systems. 
 The author is also grateful to Rabi Bhattacharya for 
 comments on the proof, and to 
 Persi Diaconis for encouraging remarks on an earlier draft.

\bibliography{fcltbib}
\end{document}